\newcommand{\bea}{\begin{eqnarray}} 
\newcommand{\eea}{\end{eqnarray}} 
\newcommand{\bee}{\begin{eqnarray*}} 
\newcommand{\eee}{\end{eqnarray*}} 
\newcommand{\al}{\begin{align*}} 
\newcommand{\eal}{\end{align*}} 
\newcommand{\be}{\begin{equation}} 
\newcommand{\ee}{\end{equation}} 
\newcommand{\eq}[1]{(\ref{#1})} 
\newcommand{\bem}{\begin{pmatrix}} 
\newcommand{\eem}{\end{pmatrix}}
\def\c{\gamma}
\def\e{\epsilon}
\def\inf{\infty}
\def\l{\lambda} 
\def\m{\mu}
\def\w{\omega}
\def\s{\sigma}            
\def\t{\tau} 
\def\th{\theta}
\def\O{\Omega}
\newcolumntype{R}{ >{$}r <{$}}
\newcolumntype{C}{ >{$}c <{$}}
\def\ll{\ell}
\newcommand{\mc}[1]{\mathcal{#1}}
\newcommand{\comment}[1]{}
\newcommand{\RR}{{\mathbb R}}
\newcommand{\CC}{{\mathbb C}}
\newcommand{\ZZ}{{\mathbb Z}}
\newcommand{\QQ}{{\mathbb Q}}
\newcommand{\HH}{{\mathbb H}}
\newcommand{\ii}{{\bf i}}
\newcommand{\tpi}{2\pi\ii}
\def\jac{\operatorname{j}}
\newcommand{\Id}{\operatorname{Id}}
\newcommand{\ex}{\operatorname{e}} 
\newcommand{\SL}{\operatorname{\textsl{SL}}}      
\newcommand{\G}{\Gamma}	
\newcommand{\g}{\gamma}	
\newtheorem{thm}{Theorem}[section]
\newtheorem{lem}[thm]{Lemma}
\theoremstyle{definition}
\theoremstyle{remark}
\numberwithin{equation}{section}
\begin{document}

\setstretch{1.4}

\title{
    \textsc{
    	On the Discrete Groups of Mathieu Moonshine
	}
    }

\author{
	Miranda C. N. Cheng\footnote{Institut de Math\'ematiques de Jussieu, UMR 7586,
	Universit\'e Paris 7, Paris, France.
	\newline\indent\indent
	{\em E-mail:} {\tt chengm@math.jussieu.fr}
	}\\
	John F. R. Duncan\footnote{
         Department of Mathematics,
         Case Western Reserve University,
         Cleveland, OH 44106,
         U.S.A.
         \newline\indent\indent
         {\em E-mail:} {\tt john.duncan@case.edu}
          }
        \vspace{23pt}\\
{\textit{Dedicated to Igor B. Frenkel on the occasion of his $60$th birthday.}}        
}

\date{}

\maketitle

\abstract{
We prove that a certain space of cusp forms for the Hecke congruence group of a given level is one-dimensional if and only if that level is the order of an element of the second largest Mathieu group. As such, our result furnishes a direct analogue of Ogg's observation that the normaliser of a Hecke congruence group of prime level has genus zero if and only if that prime divides the order of the Fischer--Griess monster group. The significance of the cusp forms under consideration is explained by the Rademacher sum construction of the McKay--Thompson series of Mathieu moonshine. Our result supports a conjectural characterisation of the discrete groups and multiplier systems arising in Mathieu moonshine.
}

\clearpage

\tableofcontents

\section{Introduction}\label{sec:intro}

Ogg established one of the most surprising results relating finite groups to modular functions---a harbinger of monstrous moonshine, cf. \cite{conway_norton}---by proving  \cite{Ogg_AutCrbMdl} that the normalizer $\G_0(p)^+$ of the Hecke congruence group 
\begin{gather}\label{eqn:intro:G0p}
	\G_0(p)=\left\{
	\begin{pmatrix}
	a&b\\c&d
	\end{pmatrix}
	\in\SL_2(\ZZ)
	\mid
	c\equiv 0\pmod{p}
	\right\}
\end{gather}
at prime level $p$ determines a genus zero quotient of the upper half-plane $\HH$ (cf. (\ref{eqn:sums:XG})) if and only if $p$ is a prime dividing the order of the (at that time conjectural) Fischer--Griess monster sporadic group. Ogg offered a bottle of Jack Daniels whiskey (cf. \cite[Rmq.1]{Ogg_AutCrbMdl}) to anybody who could explain this fact (and this is perhaps part of the reason the term moonshine has found the r\^ole that it has in mathematics).

In this article we establish an analogue of Ogg's result for Mathieu moonshine. 
Consider the weight 3/2 cusp form given by the unary theta series 
\begin{gather}\label{eqn:intro:utheta}
	\eta(\t)^3=\sum_{m\in\ZZ}(4m+1)q^{(4m+1)^2/8},
\end{gather}
which coincides with the third power of the Dedekind eta function (cf. (\ref{eqn:fun:Ded})) according to an identity due to Euler.  
We prove that a certain space of cusp forms for $\G_0(p)$ with $p$ prime is one-dimensional and spanned by the above unary theta series if and only if $p$ is a prime dividing the order of the largest Mathieu sporadic group, $M_{24}$. In fact we prove more than this: the positive integers $n$ for which $\G_0(n)$ has the above cusp form property are exactly those that arise as orders of elements of $M_{24}$ having a fixed point in the {\em defining} permutation representation (i.e. the unique non-trivial permutation representation on $24$ points); equivalently, these values of $n$ are just those that arise as orders of elements of the second largest Mathieu sporadic group, $M_{23}$. The significance of the cusp form (\ref{eqn:intro:utheta}) and the specific space of cusp forms to be considered will soon become apparent.

Mathieu moonshine was initiated by the observation \cite{Eguchi2010} of Eguchi--Ooguri--Tachikawa that the low order (and non-polar) coefficients of a certain holomorphic function 
\begin{gather}
	H(\t)=-2q^{-1/8}+90q^{7/8}+462q^{15/8}+1540q^{23/8}+4554q^{31/8}+11592q^{39/8}+\ldots
\end{gather}
on the upper half-plane (we set $q=e^{2\pi\ii\t}$ throughout) are simple positive-integer combinations of degrees of irreducible representations of $M_{24}$. For example, $90=45+45$, $462=231+231$, and $1540=770+770$, \&c. (cf. \cite{ATLAS}). In \cite{Eguchi2010} (see also \cite{Eguchi2009a}) the function $H(\t)$ was obtained by decomposing the elliptic genus of a $K3$ surface into characters of the (small) $N=4$ superconformal algebra; it is essentially (up to subtraction of the polar term and multiplication by $q^{1/8}$) the generating function for multiplicities of the characters of massive (non-BPS) representations. 

More significantly for the purpose of this paper, the function $H(\t)$ is 
a weak mock modular form of weight $1/2$ for the modular group $\SL_2(\ZZ)$ with shadow equal to $24\,\eta^3$, meaning that
\begin{gather}
	H(\g\t)\e(\g)^{-3}\jac(\g,\t)^{1/2}
		+\frac{e^{\pi\ii/4}}{2}
	\int_{-\g^{-1}\infty}^{\infty}
	\frac{24\,\eta(z)^3}{\sqrt{z+\t}}{\rm d}z=H(\t)
\end{gather}
for $\t\in\HH$ and $\g\in\SL_2(\ZZ)$ and $\jac(\g,\t)=(c\t+d)^{-1}$ in case $\g=\left(\begin{smallmatrix}a&b\\c&d\end{smallmatrix}\right)$. In fact $H(\t)$ is uniquely determined amongst such objects (see \cite[\S8]{Cheng2011}) by having polar part $-2q^{-1/8}$. 

The observation relating coefficients of $H$ to degrees of irreducible representations of $M_{24}$ suggests the existence of a graded $M_{24}$-module $K=\bigoplus_{n>0}K_{n-1/8}$ with the property that 
\begin{gather}\label{eqn:intro:HK}
	H(\t)=-2q^{-1/8}+\sum_{n>0}(\dim K_{n-1/8})q^{n-1/8}.
\end{gather}
If such a module exists then one can expect to obtain interesting functions $H_g(\t)$---{\em McKay--Thompson series} for $M_{24}$---by replacing $\dim K_{n-1/8}$ with ${\rm tr}_{K_{n-1/8}}(g)$ in (\ref{eqn:intro:HK}) for $g\in M_{24}$. 
\be\label{eqn:intro:HKg}
H_g(\t) =
	-2q^{-{1}/{8}}  + \sum_{n>0}{\rm tr}_{K_{n-1/8}}(g)q^{n-1/8}
\ee
Candidate expressions for the $H_g$ were determined in a series of papers, starting with \cite{Cheng2010_1} and the independent work \cite{Gaberdiel2010}, and concluding with \cite{Gaberdiel2010a} and \cite{Eguchi2010a}. A proof of the existence of the $M_{24}$-module $K$, with McKay--Thompson series given exactly as predicted in \cite{Cheng2010_1,Gaberdiel2010,Gaberdiel2010a,Eguchi2010a}, has appeared very recently \cite{Gannon_proof}; as yet no concrete construction of $K$ is known.

It develops that the McKay--Thompson series all enjoy good modular properties.  
To describe them we set $n_g$ to be the order of $g$, we write $\chi(g)$ for the number of fixed points of $g$ in the defining permutation representation and define $h_g$ to be the length of a minimal length cycle in a disjoint cycle decomposition of (the permutation induced by) $g$ (cf. \cite{Cheng2011,CheDun_M24MckAutFrms}). For instance, for an element $g$ in the (unique) conjugacy class with cycle shape $1^82^8$ we have $n_g=2$, $h_g=1$ and $\chi(g)=8$;  for an element $g$ in the (unique) conjugacy class with cycle shape $2^{12}$ we have $n_g=h_g=2$ and $\chi(g)=0$.
In particular,  $h_g=1$ and $\chi(g)\neq 0$ just when $g$ has a fixed point. By definition $h_g$ divides $n_g$ and by inspection $h_g$ also divides $12$ for each $g\in M_{24}$.  In this way we attach to each element $g\in M_{24}$ a discrete group $\G_g=\G_0(n_g)<\SL_2(\ZZ)$ and a multiplier system $\psi_g=\rho_{n|h}\e^{-3}$ with weight $1/2$, where $n=n_g$ and $h=h_g$. Here $\e$ denotes the multiplier system of the Dedekind eta function (cf. (\ref{eqn:fun:eps})). The function $\rho_{n|h}:\G_0(n)\to \CC^{\times}$ is defined for any pair of positive integers $n$ and $h$ such that $h$ divides $\gcd(n,12)$ by setting
\begin{gather}\label{eqn:rho}
	\rho_{n|h}
	\left(
	\begin{matrix}a&b\\c&d\end{matrix}
	\right)=\exp\left(-2\pi\ii\frac{cd}{nh}\right).
\end{gather}
Since $h$ is a divisor of $24$ we have $x\equiv y\pmod{h}$ whenever $xy\equiv 1\pmod{h}$ (cf. \cite[\S3]{conway_norton}) and it follows from this that $\rho_{n|h}$ is actually a morphism of groups; the kernel is evidently $\G_0(nh)$.
Now we may describe the modularity of all the McKay--Thompson series $H_g$ by stating that for arbitrary $g\in M_{24}$ the function $H_g$ is a weak mock modular form for $\G_0(n_g)$ with weight $1/2$ and multiplier $\rho_{n|h}\e^{-3}$, where $n=n_g$ and $h=h_g$, and the shadow of $H_g$ is $\chi(g)\eta^3$. See (\ref{eqn:sums:gtwact}) for the definition of weak mock modular form. Notice that the functions $H_g$ for $g\in M_{24}$ with and without fixed points have qualitatively different modular behaviour: the former are weak mock modular forms on $\G_0(n_g)$ with non-trivial shadow while the latter are actually weak modular forms. 

In \cite{Cheng2011} it is shown that a uniform construction of the McKay--Thompson series $H_g$ may be obtained using Rademacher sums. 
The analogous statement for monstrous moonshine was established in \cite{DunFre_RSMG}, while in \cite{UM} it was conjectured that an analogous Rademacher sum construction also exists for the McKay--Thompson series of umbral moonshine. The fact that the McKay--Thompson series in these various settings may all be  constructed using Rademacher sums indicates a deep relation between Rademacher sums and moonshine. 

To explore this relation further, and to motivate the spaces of cusp forms we study in this work, recall (from \cite{Cheng:2012qc}, for example) that given a group $\G<\SL_2(\RR)$ commensurable with $\SL_2(\ZZ)$, a multiplier system $\psi$ for $\G$, a compatible weight $w$ and a compatible index $\mu$, we may consider the Rademacher sum $R_{\G,\psi,w}^{[\m]}$ (we refer to \cite{Cheng:2012qc} for the definition). In the case that it converges $R^{[\m]}_{\G,\psi,w}$ defines a weak mock modular form for the group $\G$ with multiplier $\psi$ and weight $w$, and Fourier expansion of the form $R_{\G,\psi,w}^{[\m]}(\t)=q^{\mu}+O(1)$. (We typically have $w<1$ and $\m<0$ in applications.) The shadow of $R_{\G,\psi,w}^{[\m]}$ is a modular form for $\G$---a cusp form in case $\mu< 0$---with the inverse multiplier $\overline{\psi}$ and dual weight $2-w$, and is itself a Rademacher sum. We refer to \cite{Cheng:2012qc} for an exposition. 

Write $S_{\psi,w}(\G)$ for the space of cusp forms for $\G$ with multiplier $\psi$ and weight $w$, so that $S_{\psi,w}(\G)$ is the space of possibilities for the shadow of a Rademacher sum $R^{[\m]}_{\G,\overline\psi,2-w}$. As we shall see presently the Rademacher sums of monstrous and Mathieu moonshine have the common feature that their corresponding spaces $S_{\psi,w}(\G)$ are extremely small; viz., zero or one dimensional, and this feature plays an important r\^ole in determining the Rademacher sums themselves. From this point of view Ogg's result and the result of the present paper may be viewed as two manifestations of a single (as yet empirical) principle of the Rademacher sum construction. 

In the case of monstrous moonshine the relevant Rademacher sums have trivial multiplier, weight $0$ and $\m=-1$ \cite{DunFre_RSMG} and thus their shadows lie in the spaces $S_2(\G)=S_{1,2}(\G)$ of cusp forms with trivial multiplier of weight $2$. 
The fact that the dimension of $S_2(\G)$ coincides with the genus of the compact Riemann surface $X_{\G}$ determined by $\G$ (cf. (\ref{eqn:sums:XG})) indicates the close connection between Rademacher sums and the genus zero property: the weight $0$ Rademacher sum $R^{[-1]}_{\G,1,0}$ is forced to have trivial shadow, and thus be $\G$-invariant (i.e. a weight $0$ weak modular form for $\G$), whenever $X_{\G}$ has genus zero since in that case the space of possible shadows $S_2(\G)$ is zero-dimensional. In \cite{DunFre_RSMG} it is shown that the converse is also true, so that a discrete group $\G$ determines a genus zero surface $X_{\G}$ if and only if the Rademacher sum $R^{[-1]}_{\G,1,0}$ is $\G$-invariant, and the genus zero property of monstrous moonshine is thus reformulated in terms of Rademacher sums. (As such, the main result of \cite{Cheng2011} may be regarded as verifing a natural analogue of the genus zero property for Mathieu moonshine;  we refer to \cite{CheDun_M24MckAutFrms,Cheng:2012qc} for a fuller discussion.) In particular, the equality of a monstrous McKay--Thompson series $T_g$ with $R^{[-1]}_{\G_g,1,0}$ (up to an additive constant, cf. \cite{DunFre_RSMG,Cheng:2012qc}) implies that $\G_g$ has genus zero. 
 
To recover the McKay--Thompson series of Mathieu moonshine corresponding to elements with fixed points we choose a positive integer $n$ and 
set $\G=\G_0(n)$ and $\psi=\e^{-3}$ (cf. (\ref{eqn:fun:eps})) and $w=1/2$ and also $\m=-1/8$. Then the space of possible shadows for the Rademacher sum $R^{[-1/8]}_{\G_0(n),\e^{-3},1/2}$ is $S_{\e^3,3/2}(\G_0(n))$, and we abbreviate this to $S_{\e^3,3/2}(n)$ to ease notation. We can see immediately that $S_{\e^{3},3/2}(n)$ has positive dimension for all positive $n$ since the cusp form $\eta^3$ (cf. (\ref{eqn:intro:utheta})) is a non-zero element of $S_{\e^3,3/2}(1)$, and $S_{\e^3,3/2}(m)$ embeds in $S_{\e^3,3/2}(n)$ whenever $m|n$. In the course of proving \cite{Cheng2011} that the McKay--Thompson series $H_g$ determined in the aforementioned articles \cite{Cheng2010_1,Gaberdiel2010,Gaberdiel2010a,Eguchi2010a} satisfy $H_g=-2R^{[-1/8]}_{\G_0(n),\e^{-3},1/2}$ for $n=n_g$ when $g$ has a fixed point, it was shown that $S_{\e^3,3/2}(n)$ is one-dimensional, spanned by $\eta^3$, whenever $n$ is the order of an element of $M_{23}$ (cf. \cite[\S8.1]{Cheng2011}). In particular, the restricted nature of the space $S_{\e^3,3/2}(n)$ was crucial for the purposes of determining the Rademacher sums associated to elements of the subgroup $M_{23}$. 

In this article we make an important step towards a characterisation of the discrete groups and multipliers of Mathieu moonshine, and in so doing strengthen the analogy with the monstrous case just described, by proving the converse statement. We identify an isomorphism between $S_{\e^3,3/2}(n)$ and a certain space of meromorphic functions on the modular curve $X_0(n)$ (cf. (\ref{eqn:sums:XG}-\ref{eqn:pre:G0n})) and use this together with the Riemann--Roch theorem to prove that $\dim S_{\e^3,3/2}(n)=1$ if and only if $n$ is the order of an element of $M_{23}$. Since there is an element in $M_{23}$ with order $p$ for any prime $p$ dividing the order of $M_{24}$ we obtain a direct analogue of Ogg's result characterising the primes $p$ for which $\G_0(p)^+$ has genus zero in terms of the Monster group: the primes $p$ for which $\G_0(p)$ has a unique (up to scale) cusp form of weight $3/2$ with multiplier $\e^3$ are exactly those that divide the order of $M_{24}$, which are also just the primes $p$ such that $p+1$ divides $24$.

The rest of the paper is organised as follows. In \S\ref{sec:pre} we collect preliminary definitions and notations. In \S\ref{sec:fms} we present the proof of the main theorem of the paper. We conclude in \S\ref{sec:persp} with some perspectives on possible future developments, including a conjectural characterisation of the discrete groups and multipliers of Mathieu moonshine.

\section{Preliminaries}\label{sec:pre}

The group $\SL_2(\RR)$ acts naturally on the upper half-plane $\HH$ by orientation preserving isometries according to the rule 
\begin{gather}\label{eqn:pre:sl2actn}
	\bem
	a&b\\c&d
	\eem
	\t=\frac{a\t+b}{c\t+d}.
\end{gather}
For $\g\in \SL_2(\RR)$ with lower row $(c,d)$ define 
\begin{gather}
	\jac(\g,\t)=(c\t+d)^{-1}
\end{gather}
so that $\jac(\g,\t)^2$ is the derivative (with respect to $\t$) of the action (\ref{eqn:pre:sl2actn}) when $\g=\left(\begin{smallmatrix}a&b\\c&d\end{smallmatrix}\right)$. 
For $\G$ a finite index subgroup of the modular group $\SL_2(\ZZ)$ and for $w\in \RR$ say that a function $\psi:\G\to \CC$ is a {\em multiplier system} for $\G$ with weight $w$ if
\begin{gather}\label{eqn:sums:mult}
	\psi(\g_1)\psi(\g_2)\jac(\g_1,\g_2\t)^{w}\jac(\g_2,\t)^{w}
	=
	\psi(\g_1\g_2)\jac(\g_1\g_2,\t)^{w}
\end{gather}
for all $\g_1,\g_2\in \G$ where here and everywhere else in this paper we choose the principal branch of the logarithm (cf. (\ref{eqn:fun:pbranch})) in order to define the exponential $x\mapsto x^s$ in case $s$ is not an integer. 
Given a multiplier system $\psi$ for $\G$ with weight $w$ we may define the {\em $(\psi,w)$-action} of $\G$ on the space $\mc{O}(\HH)$ of holomorphic functions on the upper half-plane by setting
\begin{gather}\label{eqn:sums:psiw_actn}
	(f|_{\psi,w}\g)(\t)=f(\g\t)\psi(\g)\jac(\g,\t)^{w}
\end{gather}
for $f\in \mc{O}(\HH)$ and $\g\in \G$. We then say that $f\in \mc{O}(\HH)$ is an {\em unrestricted modular form} with multiplier $\psi$ and weight $w$ for $\G$ in the case that $f$ is invariant for this action; i.e. $f|_{\psi,w}\g=f$ for all $\g\in\G$. Since $(-\g)\t=\g\t$ and $\jac(-\Id,\t)^{w}=e^{-\pi\ii w}$ (cf. (\ref{eqn:fun:pbranch})) the multiplier $\psi$ must satisfy the {\em consistency condition} $\psi(-\Id)=e^{\pi\ii w}$ in order that the corresponding space(s) of unrestricted modular forms be non-vanishing when $-
\Id\in \G$. 

We assume throughout that the multiplier $\psi$ for $\G$ is of the form $\psi=\rho\tilde{\psi}$ where $\rho:\G\to\CC^{\times}$ is a morphism of groups and $\tilde{\psi}$ is a multiplier for $\SL_2(\ZZ)$. With this understanding we say that an unrestricted modular form $f$ for $\G$ with multiplier $\psi$ and weight $w$ is a {\em weak modular form} in case $f$ has at most exponential growth at the cusps of $\G$; i.e. in case there exists $C>0$ such that $(f|_{\tilde{\psi},w}\s)(\t)=O(e^{C\Im(\t)})$ as $\Im(\t)\to \inf$ for any $\s\in\SL_2(\ZZ)$. We say that $f$ is a {\em modular form} if $(f|_{\tilde{\psi},w}\s)(\t)$ remains bounded as $\Im(\t)\to \inf$ for any $\s\in\SL_2(\ZZ)$, and we say $f$ is a {\em cusp form} if $(f|_{\tilde{\psi},w}\s)(\t)\to 0$ as $\Im(\t)\to\inf$ for any $\s\in\SL_2(\ZZ)$.

Suppose that $\psi$ is a multiplier system for $\G$ with weight $w$ and $g$ is a modular form for $\G$ with the {inverse multiplier system} $\bar{\psi}:\g\mapsto \overline{\psi(\g)}$ and {\em dual} weight $2-w$. Then we may use $g$ to twist the $(\psi,w)$-action of $\G$ on $\mc{O}(\HH)$ by setting
\begin{gather}\label{eqn:sums:gtwact}
	\left(f|_{\psi,w,g}\g\right)(\t)
	=
	f(\g\t)\psi(\g)\jac(\g,\t)^{w}
	+\left(\frac{\ii}{4}\right)^{1-w}
	\int_{-\g^{-1}\infty}^{\infty}(z+\t)^{-w}\overline{g(-\bar{z})}{\rm d}z.
\end{gather}
A {\em weak mock modular form} for $\G$ with multiplier $\psi$, weight $w$, and {\em shadow} $g$ is a holomorphic function $f$ on $\HH$ that is invariant for the $(\psi,w,g)$-action of $\G$ defined in (\ref{eqn:sums:gtwact}) and which has at most exponential growth at the cusps of $\G$ (i.e. there exists $C>0$ such that $(f|_{\tilde{\psi},w}\s)=O(e^{C\Im(\t)})$ for all $\s\in\SL_2(\ZZ)$ as $\Im(\t)\to\inf$ where $\tilde{\psi}$ is as in the previous paragraph.) A weak mock modular form is called a {\em mock modular form} in case it is bounded at every cusp. From this point of view a (weak) modular form is a (weak) mock modular form with vanishing shadow. The notion of mock modular form developed from Zwegers' ground breaking work \cite{zwegers} on Ramanujan's mock theta functions. We refer to \cite{zagier_mock} for an excellent review. The notion of mock modular form is closely related to the notion of {\em automorphic integral} which was introduced by Niebur in \cite{Nie_ConstAutInts}; see \cite{Cheng:2012qc} for a discussion of this. 

Since $\G$ is assumed to be a subgroup of $\SL_2(\ZZ)$ of finite index its natural action on the boundary $\hat{\RR}=\RR\cup\{\inf\}$ of $\HH$ restricts to $\hat{\QQ}=\QQ\cup\{\inf\}$. The orbits of $\G$ on $\hat{\QQ}$ are called the {\em cusps} of $\G$. The quotient space
\begin{gather}\label{eqn:sums:XG}
	X_{\G}=\G\backslash\HH\cup\hat{\QQ}
\end{gather}
is naturally a compact Riemann surface (cf. e.g. \cite[\S1.5]{Shi_IntThyAutFns}). We say that $\G$ has {\em genus} $g$ in case $X_{\G}$ has genus $g$ as an orientable surface.

In this paper we will be concerned primarily with the case that $\G$ is the {\em Hecke congruence group} of level $n$, denoted $\G_0(n)$, for some integer $n$.
\begin{gather}\label{eqn:pre:G0n}
	\G_0(n)=\left\{
	\begin{pmatrix}
	a&b\\c&d
	\end{pmatrix}
	\in\SL_2(\ZZ)
	\mid
	c\equiv 0\pmod{n}
	\right\}
\end{gather}
We write $X_{0}(n)$ for $X_{\G}$ when $\G=\G_0(n)$.

\section{Cusp forms}\label{sec:fms}

Recall that $S_{\e^3,3/2}(n)$ denotes the space of cusp forms for $\G_0(n)$ with multiplier $\e^3$ and weight $3/2$. Then the function $\eta^3$ belongs to $S_{\e^3,3/2}(n)$ for every positive integer $n$, so in particular $S_{\e^3,3/2}(n)$ has dimension at least $1$ for all $n$. We will prove that the values of $n$ for which $\dim S_{\e^3,3/2}(n)=1$ are exactly those that arise as the order of an element of the sporadic group $M_{23}$; viz., $n\in\{1,2,3,4,5,6,7,8,11,14,15,23\}$.

Observe that if $g\in S_{\e^3,3/2}(n)$ then $\tilde{g}=g\eta^{-3}$ is a meromorphic function on $X=X_0(n)$ with poles only at the cusps of $\G=\G_0(n)$, and the order of the pole at a cusp $x\in\G\backslash\hat{\QQ}$ say is bounded from above by the order of vanishing of $\eta^3$ at $x$. For a more precise statement define a divisor $D$ on $X$ by setting 
\begin{gather}\label{eqn:fms:D}
	D=\sum_{x\in\G\backslash\hat{\QQ}}
	\left(\left\lceil \frac{w_x}{8}\right\rceil-1\right)x
\end{gather}
where $w_x$ denotes the width of $\G$ at the cusp $x$. Write $\mc{K}_X(D)$ for the vector space composed of meromorphic functions $f$ on $X$ satisfying $(f)+D\geq 0$. Then $g\eta^{-3}$ belongs to $\mc{K}_X(D)$ whenever $g\in S_{\e^3,3/2}(n)$, and conversely, if $\tilde{g}\in\mc{K}_X(D)$ then $\tilde{g}\eta^3$ belongs to $S_{\e^3,3/2}(n)$. So multiplication by $\eta^3$ defines an isomorphism of vector spaces $\mc{K}_X(D)\to S_{\e^3,3/2}(n)$. According to the Riemann--Roch Theorem we have 
$\dim\mc{K}_X(D)-\dim\O_X(D)={\rm deg}(D)+1-{\rm genus}(X)$
where $\O_X(D)$ denotes the space of holomorphic differentials $\w$ on $X$ satisfying $(\w)-D\geq 0$. In particular then 
\begin{gather}\label{eqn:fms:RRbound}
S_{\e^3,3/2}(n)
\geq \deg(D)+1-{\rm genus}(X)
\end{gather}
when $D$ is given by (\ref{eqn:fms:D}) and $X=X_0(n)$.
In the case that $X=X_0(n)$ we have the explicit formula (cf. \cite[\S6.1]{Ste_MdlrFrmsCmpApp})
\begin{gather}\label{eqn:fms:genus}
	{\rm genus}(X_0(n))=1+\frac{1}{12}i(n)-\frac{1}{4}\mu_2(n)-\frac{1}{3}\mu_3(n)-\frac{1}{2}c(n)
\end{gather}
where $i(n)$ is the index of $\G_0(n)$ in the modular group $\SL_2(\mathbb Z)$, and $c(n)=\#\G_0(n)\backslash\hat{\QQ}$ is the number of cusps of $\G_0(n)$, and 
$\mu_2(n)$ and $\mu_3(n)$ are defined by setting
\begin{gather}
	\mu_2(n)=\begin{cases}
	0&\text{if $4|n$,}\\
	\prod_{p|n}\left(1+\left(\frac{-4}{p}\right)\right)&\text{otherwise},
	\end{cases}\\
	\mu_3(n)=\begin{cases}
	0&\text{if $2|n$ or $9|n$,}\\
	\prod_{p|n}\left(1+\left(\frac{-3}{p}\right)\right)&\text{otherwise},
	\end{cases}
\end{gather}
where $\left(\frac{k}{p}\right)$ denotes the Kronecker symbol (cf. \S\ref{sec:fun}). Substituting (\ref{eqn:fms:genus}) and the expression (\ref{eqn:fms:D}) for $D$ into (\ref{eqn:fms:RRbound}) we thus obtain the {lower bound}
\begin{gather}\label{eqn:fms:strongbound}
\dim S_{\e^3,3/2}(n)\geq \sum_{x\in\G\backslash\hat{\QQ}}
	\left\lceil \frac{w_x}{8}\right\rceil
	-\frac{1}{12}i(n)-\frac{1}{2}c(n)+\frac{1}{4}\mu_2(n)+\frac{1}{3}\mu_3(n)
\end{gather}
on $\dim S_{\e^3,3/2}(n)$. A weaker but still useful lower bound is the formula
\begin{gather}\label{eqn:fms:weakbound}
\dim S_{\e^3,3/2}(n)\geq \sum_{x\in\G\backslash\hat{\QQ}}
	\left\lceil \frac{w_x}{8}\right\rceil
	-\frac{1}{12}i(n)-\frac{1}{2}c(n),
\end{gather}
obtained by ignoring the contribution to ${\rm genus}(X_0(n))$ of elliptic points on $X_0(n)$. Since $i(n)$ coincides with the sum of the widths of the cusps of $\G_0(n)$ (cf. e.g. \cite[\S5.3]{DunFre_RSMG}) we have the crude lower bound
\begin{gather}\label{eqn:fms:Scrude}
	\dim S_{\e^3,3/2}(n)\geq \frac{1}{24}i(n)-\frac{1}{2}c(n)
\end{gather}
which shows at a glance that $S_{\e^3,3/2}(n)$ has dimension greater than $1$ for sufficiently large $n$ since the index of $\G_0(n)$ in the modular group grows faster with $n$ than does the number of cusps of $\G_0(n)$. Precise formulas for $i(n)$ and $c(n)$ are as follows (cf. e.g. \cite[\S6.1]{Ste_MdlrFrmsCmpApp}).
\begin{align}
	{i}(n)&= \prod_{p|n}(p^{\nu_p(n)}+p^{\nu_p(n)-1})\label{eqn:fms:ind}\\
	c(n) &=\sum_{d|n}\phi(\gcd(d,n/d))\label{eqn:fms:cuspnum}
\end{align}
In (\ref{eqn:fms:ind}) we write $\nu_p(n)$ for the greatest positive integer such that $p^{\nu_p(n)}$ divides $n$. In (\ref{eqn:fms:cuspnum}) we write $\phi$ for the Euler totient function.

The cusps of $\G_0(n)$ are indexed by equivalence classes of pairs $(a,d)$ where $d|n$ and $\gcd(a,d)=1$, and $(a,d)\sim (a',d')$ just when $d=d'$ and $a\equiv a'$ modulo $\gcd(d,n/d)$. Writing $d= \prod_{p|n} p^{\nu_p(d)}$, the width of the cusp corresponding to a pair $(a,d)$ is given by 
\be
\prod_{p|n} p^{(\nu_p(n)-2\nu_p(d))H[\nu_p(n)-2\nu_p(d)]}
\ee
where $H[n]$ is the Heaviside step function given by $0$ when $n<0$ and $1$ when $n\geq0$.

Using \eq{eqn:fms:strongbound} and \eq{eqn:fms:weakbound} together with (\ref{eqn:fms:ind}) and (\ref{eqn:fms:cuspnum}) we will now show that $\dim S_{\e^3,3/2}(n)>1$ for all $n$ except those that arise as the order of an element of the Mathieu group $M_{23}$. We start with the following result on prime powers.

\begin{lem}\label{prime_power_lemma}
If $n$ is a prime power then $\dim S_{\e^3,3/2}(n)>1$ for $n\notin\{1,2,3,5,7,11,23,4,8\}$.
 \end{lem}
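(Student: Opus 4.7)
The plan is to apply the Riemann--Roch lower bound (\ref{eqn:fms:strongbound}) together with the formulas (\ref{eqn:fms:ind}), (\ref{eqn:fms:cuspnum}) and the cusp-width formula displayed just before the lemma, all of which simplify considerably when $n=p^k$. For such $n$ the cusps of $\G_0(p^k)$ are parametrised by the divisors $p^j$, $0\leq j\leq k$, there being $\phi(p^{\min(j,\,k-j)})$ cusps at $d=p^j$ each of common width $p^{\max(k-2j,\,0)}$. In particular $i(p^k)=p^{k-1}(p+1)$, and the unique cusp at $d=1$ has width $p^k$, contributing $\lceil p^k/8\rceil$ to the sum $\sum_x \lceil w_x/8\rceil$ appearing in (\ref{eqn:fms:strongbound}).

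I would split the analysis according to whether $k=1$ or $k\geq 2$. For a prime $p$ there are exactly two cusps, of widths $p$ and $1$, so the weak form (\ref{eqn:fms:weakbound}) reduces to
\[
\dim S_{\e^3,3/2}(p)\;\geq\;\lceil p/8\rceil-\tfrac{p+1}{12}.
\]
Using $\lceil p/8\rceil\geq p/8$ this is bounded below by $(p-2)/24$, which exceeds $1$ for every prime $p\geq 29$; a short direct computation handles $p\in\{17,19\}$. The one remaining prime outside the exclusion list is $p=13$, where the weak bound fails; here I would invoke the strong bound (\ref{eqn:fms:strongbound}), noting that $13\equiv 1\pmod{12}$ so that $\mu_2(13)=\mu_3(13)=2$, and verify that this yields $\dim S_{\e^3,3/2}(13)\geq 2$.

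For $k\geq 2$ the only prime powers on the exclusion list are $4$ and $8$, so I must confirm the strict inequality for every other prime power. I would dispose of the finitely many small cases --- in particular $n\in\{9,16,25,27\}$, together with a handful of further values needed to reach the asymptotic regime --- by direct substitution into (\ref{eqn:fms:strongbound}). For the remaining larger prime powers the cusp at $d=1$ alone satisfies
\[
\lceil p^k/8\rceil-i(p^k)/12\;\geq\;p^{k-1}(p-2)/24,
\]
which grows strictly faster than the crude estimate $c(p^k)\leq (k+1)\,p^{\lfloor k/2\rfloor}$ once $p$ is odd, so the bound in (\ref{eqn:fms:strongbound}) eventually exceeds $1$ and a finite case check closes the gap.

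The main obstacle is not conceptual but rather the borderline case $p=13$, where the weak bound just fails and the elliptic-point corrections $\mu_2,\mu_3$ are essential to clear the threshold. A secondary subtlety is the family $n=2^k$, where the leading contribution $\lceil 2^k/8\rceil$ exactly cancels $i(2^k)/12$, so one must collect contributions from several cusps (notably that at $d=2$) simultaneously rather than relying on the dominant cusp alone.
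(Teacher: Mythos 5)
Your proposal is correct and rests on the same machinery as the paper: the Riemann--Roch lower bounds (\ref{eqn:fms:weakbound}) and (\ref{eqn:fms:strongbound}) evaluated via the explicit cusp data of $\G_0(p^k)$. The treatment of primes is essentially identical --- the estimate $(p-2)/24$ for $p\geq 29$, exact ceilings for $17$ and $19$, and the elliptic-point corrections $\mu_2(13)=\mu_3(13)=2$ for the borderline case $p=13$, which do yield $\dim S_{\e^3,3/2}(13)\geq 2$. For $k\geq 2$ you organise things differently: the paper computes the bound only for even powers $p^{2\l}$, obtaining (\ref{even_power_prime}), which settles $p^2$ for odd $p$ and $2^4$, and then invokes the embedding $S_{\e^3,3/2}(m)\hookrightarrow S_{\e^3,3/2}(n)$ for $m\mid n$ to dispose of every remaining prime power at once. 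You instead run an asymptotic-plus-finite-check argument for each $p^k$ separately; this does work (your dominant-cusp estimate $p^{k-1}(p-2)/24$ eventually beats $c(p^k)/2$ for odd $p$ once the unit contributions of the remaining cusps are counted, and summing over several cusps does rescue $n=2^k$), but it is more laborious and leaves the family $2^k$, $k\geq 5$, only sketched. The divisibility embedding would let you stop at $n=16$, where the weak bound already gives $7-2-3=2$.
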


\begin{proof}
First consider the case that $n=p$ is prime. We have $i(p)=p+1$ and $c(p)=2$. The cusp at infinity has width $1$ and the cusp represented by $0\in\hat{\QQ}$ has width $p$ so $\dim S_{\e^3,3/2}(p)\geq (p-2)/24$ according to (\ref{eqn:fms:weakbound}) and thus $\dim_{\e^3,3/2}(n)>1$ when $n$ is a prime greater than $23$. For the case that $n$ is prime it remains to show that $\dim S_{\e^3,3/2}(p)>1$ for $p\in \{13,17,19\}$ and this follows from (\ref{eqn:fms:strongbound}) together with the explicit computations $\mu_2(13)=\mu_2(17)=\mu_3(13)=\mu_3(19)=2$ and $\mu_2(19)=\mu_3(17)=0$.

Now suppose that $n=p^{2\l}$ for some $\l\geq 1$. Then we have $i(p^{2\l}) =p^{2\l}+p^{2\l-1} $ and $c(p^{2\l}) =p^{\l}+p^{\l-1} $, and there are $p^\l$ cusps of width 1, one cusp of width $p^{2\l}$, and $p^{\l-k}-p^{\l-k-1}$ cusps of width $p^{2k}$ for every $k=1,\dots,\l-1$. To get a simple bound on $\dim S_{\e^3,3/2}(p^{2\l})$ we use
the fact that $p^\l$ cusps have width $1$ and obtain the lower bound 
\begin{gather}
	\sum_{x\in \G\backslash\hat{\QQ}}\left\lceil \frac{w_x}{8}\right\rceil\geq \frac{p^{2\l}+p^{2\l-1}-p^{\l}}{8}+p^{\l},
\end{gather}
and then an application of (\ref{eqn:fms:weakbound}) yields
\begin{align}\label{even_power_prime}
\dim S_{\e^3,3/2}(p^{2\l}) &\geq \frac{p^{2\l}+p^{2\l-1}+9p^\l-12 p^{\l-1}}{24}.
\end{align}
Taking $\l=1$ in (\ref{even_power_prime}) we see that $\dim S_{\e^3,3/2}(n)>1$ whenever $n$ is the square of a prime greater than $2$, and taking $\l=2$ in (\ref{even_power_prime}) we see that $\dim S_{\e^3,3/2}(n)>1$ whenever $n$ is the fourth power of any prime. Using now the elementary fact that $S_{\e^3,3/2}(m)$ is naturally identified with a subspace of $S_{\e^3,3/2}(n)$ whenever $m$ is a divisor of $n$ we conclude that $\dim S_{\e^3,3/2}\geq 2$ whenever $n$ is a prime power not in the set $\{1,2,3,5,7,11,23,4,8\}$, as we required to show.
%
\end{proof}

\begin{lem}
If $n=\prod_{i=1}^k p_i$ is a square-free product of distinct primes, then $\dim S_{\e^3,3/2}(n)>1$ unless  $n\in\{6,14,15\}$.
\end{lem}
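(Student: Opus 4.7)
The plan is to apply the weak bound (\ref{eqn:fms:weakbound}) in combination with the embedding $S_{\e^3,3/2}(m) \hookrightarrow S_{\e^3,3/2}(n)$ for $m \mid n$ and with Lemma \ref{prime_power_lemma}. First I would specialise the general data of \S\ref{sec:fms} to the squarefree case: for $n = p_1 \cdots p_k$ the formula (\ref{eqn:fms:ind}) gives $i(n) = \prod_i (p_i + 1)$, and since $\gcd(d, n/d) = 1$ for every divisor $d$ of a squarefree $n$, the formula (\ref{eqn:fms:cuspnum}) reduces to $c(n) = 2^k$. The cusps of $\G_0(n)$ are in bijection with the $2^k$ divisors $d \mid n$, and the explicit width formula displayed just before Lemma \ref{prime_power_lemma} simplifies here to assign width $n/d$ to the cusp indexed by $d$. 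Next I would narrow the set of primes involved: if some prime $p \mid n$ lies outside $P := \{2,3,5,7,11,23\}$, then $p$ is not in the exceptional set of Lemma \ref{prime_power_lemma}, so $\dim S_{\e^3,3/2}(p) \geq 2$, and hence $\dim S_{\e^3,3/2}(n) \geq 2$ by the embedding. Henceforth all prime factors of $n$ are assumed to lie in $P$.

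For $k = 2$ this leaves the $\binom{6}{2} = 15$ pairs from $P$ to inspect. For each candidate $n = p_1 p_2$ with $n \notin \{6, 14, 15\}$, I would substitute the data above into (\ref{eqn:fms:weakbound}) to exhibit
\[
\sum_{d \mid n} \left\lceil \frac{n/d}{8} \right\rceil - \frac{(p_1 + 1)(p_2 + 1)}{12} - 2 > 1,
\]
whence $\dim S_{\e^3,3/2}(n) \geq 2$ by integrality. The exceptional triple $\{6, 14, 15\}$ is exactly where this right-hand side equals $1$, so it is worth noting that the bound is tight precisely at those values, consistent with the overall theorem. For all twelve other pairs the margin is at least $1/3$ (occurring at $n = 21$) and grows quickly as the primes increase.

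For $k \geq 3$, every squarefree $n$ with at least three prime factors from $P$ contains as a divisor some squarefree $m$ with exactly three such factors, so by monotonicity under the embedding it suffices to check $\dim S_{\e^3,3/2}(m) \geq 2$ for the $\binom{6}{3} = 20$ triples from $P$. In every such case the weak bound is easily comfortable; for instance at $m = 30$ it already yields $\dim S_{\e^3,3/2}(30) \geq 3$, and for larger triples the bound only increases because the sum $\sum_d \lceil (n/d)/8 \rceil$ grows like $i(n)/8$ while the subtracted terms grow only like $i(n)/12 + 2^{k-1}$. The main practical obstacle is thus not any structural subtlety but rather the completeness and correctness of the finite case check, since (\ref{eqn:fms:weakbound}) is nearly sharp near the exceptional triple $\{6, 14, 15\}$ and one must confirm no other squarefree $n$ produces a bound of exactly $1$.
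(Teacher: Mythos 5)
Your proposal is correct, and it reaches the conclusion using the same basic machinery as the paper (the Riemann--Roch lower bounds of \S\ref{sec:fms}), but it organises the finite check differently and in one respect more economically. The paper first disposes of large primes via the crude estimate (\ref{eqn:fms:Scrude}), then simplifies the weak bound for $n=pq$ to $\tfrac{1}{24}(p+1)(q-2)$ by replacing the ceilings $\lceil pq/8\rceil$ and $\lceil q/8\rceil$ with $pq/8$ and $q/8$; this simplification is lossy enough that the cases $n\in\{10,21,30,42\}$ survive and must be settled with the \emph{strong} bound (\ref{eqn:fms:strongbound}), i.e.\ with explicit elliptic-point counts $\mu_2(n)$, $\mu_3(n)$. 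You instead restrict the prime set to $\{2,3,5,7,11,23\}$ via Lemma \ref{prime_power_lemma} and the embedding $S_{\e^3,3/2}(p)\hookrightarrow S_{\e^3,3/2}(n)$, and then evaluate the weak bound (\ref{eqn:fms:weakbound}) with the \emph{exact} ceiling sums; together with integrality of the dimension this already suffices everywhere (e.g.\ for $n=10$ the bound is $5-\tfrac{18}{12}-2=\tfrac32$ and for $n=21$ it is $6-\tfrac{32}{12}-2=\tfrac43$, both forcing $\dim\geq 2$, while at $n\in\{6,14,15\}$ the bound equals exactly $1$), so you never need $\mu_2$ and $\mu_3$. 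Your stated width formula $w_d=n/d$ for squarefree $n$, the margins you quote at $n=21$ and $n=30$, and the reduction of $k\geq 4$ to $k=3$ by divisibility are all correct; I have verified that the remaining pairs and triples from your restricted prime set do satisfy the weak bound comfortably. The one thing to be explicit about in a final write-up is that the $15+20$ individual evaluations are actually tabulated (or replaced by a clean uniform inequality), since as you note the bound is sharp at the exceptional values and the argument ultimately rests on that finite verification.
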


\begin{proof}
For $n=\prod_{i=1}^k p_i$ we have $i(n) = \prod_{i=1}^k (p_i+1)$ and $c(n) = 2^k$. Hence the crude estimate (\ref{eqn:fms:Scrude}) gives
\be\label{prime_prod}
\dim S_{\e^3,3/2}(n) > \frac{1}{24} \prod_{i=1}^k (p_i+1) - 2^{k-1}. 
\ee
Take the case $k=2$ and $n=pq$. Then the above formula shows that $\dim S_{\e^3,3/2}(n) \geq 2$ if both $p$ and $q$ are larger than 8. 
Without loss of generality let $p<q$. To refine (\ref{prime_prod}) we put  $\lceil \frac{p}{8} \rceil =1$ in (\ref{eqn:fms:weakbound}) and obtain
\be
\dim S_{\e^3,3/2}(pq) > \frac{pq}{8}+ \frac{q}{8}-\frac{1}{12} (p+1)(q+1) = \frac{1}{24} (p+1)(q-2)
\ee 
and thus ${\dim}S_{\e^3,3/2}(pq)>1$ for $pq$ not in the set $\{6,10,14,15,21\}$. 

For $k=3$ the crude estimate (\ref{prime_prod}) excludes all possibilities except for $n=30$ and $n=42$. Similarly, it excludes all possibilities with $k>3$, and so it remains to show that ${\rm dim}S_{\e^3,3/2}(n)>1$ for $n\in\{10,21,30,42\}$. This follows in each case from an explicit computation 
of $\mu_2(n)$ and $\mu_3(n)$ and an application of (\ref{eqn:fms:strongbound}).
\end{proof}

\begin{lem}
We have  $\dim S_{\e^3,3/2}(n)>1$ whenever $n>1$ is not a prime power and is not square-free.
 \end{lem}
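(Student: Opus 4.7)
The plan is to reduce to the divisor case $m=p^2q$ for distinct primes $p,q$, and then apply the refined bound (\ref{eqn:fms:weakbound}) with explicit cusp widths. Since $n$ is not square-free there is a prime $p$ with $p^2\mid n$, and since $n$ is not a prime power there is a distinct prime $q\mid n$; hence $p^2q\mid n$. Combined with the embedding $S_{\e^3,3/2}(m)\hookrightarrow S_{\e^3,3/2}(n)$ used in the previous lemmas, it suffices to prove $\dim S_{\e^3,3/2}(p^2q)>1$ for all pairs of distinct primes $p,q$.

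Next I would tabulate the cusps of $\G_0(p^2q)$ using the cusp parametrisation given in the excerpt. The divisors of $p^2q$ are $1,p,p^2,q,pq,p^2q$, and the formulae \eq{eqn:fms:ind} and \eq{eqn:fms:cuspnum} and the explicit width formula yield $i(p^2q)=p(p+1)(q+1)$, $c(p^2q)=2p+2$, and the following list of $2p+2$ cusps: one of width $p^2q$, $p$ of width $q$, one of width $p^2$, and $p$ of width $1$. Substituting into (\ref{eqn:fms:weakbound}) gives
\begin{gather*}
\dim S_{\e^3,3/2}(p^2q)\geq p+p\left\lceil \tfrac{q}{8}\right\rceil+\left\lceil\tfrac{p^2}{8}\right\rceil+\left\lceil\tfrac{p^2q}{8}\right\rceil-\tfrac{p(p+1)(q+1)}{12}-(p+1).
\end{gather*}

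For all but finitely many $(p,q)$ the leading terms $\tfrac{p^2q}{8}$ on the right dominate $\tfrac{p^2q}{12}$ coming from the index, and a straightforward estimate using $\lceil x/8\rceil\geq x/8$ shows that the right hand side is bounded below by roughly $p^2q/24$ minus lower order corrections, exceeding $1$ once $p^2q$ is not too small. The plan is then to bound this algebraically for $\max(p,q)\geq 5$ and to verify by hand the remaining few cases, namely $n=12,18,20$. For these one computes directly: e.g. for $n=12$ the widths are $(12,3,3,4,1,1)$ and (\ref{eqn:fms:weakbound}) gives $\dim\geq 2+1+1+2+1-2-3=2$; the cases $n=18$ and $n=20$ are handled the same way.

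The main obstacle is the smallest cases $n=12,18,20$, where the bound (\ref{eqn:fms:Scrude}) is far too crude and even (\ref{eqn:fms:weakbound}) has a small margin, so the ceiling sum must be computed honestly from the width table rather than replaced by $w_x/8$. Once these small cases are cleared, the general estimate together with the embedding $S_{\e^3,3/2}(p^2q)\hookrightarrow S_{\e^3,3/2}(n)$ completes the proof.
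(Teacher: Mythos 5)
Your route is genuinely different from the paper's and is essentially sound. The paper leans on the preceding two lemmas to reduce at once to $n=4p$ or $n=8p$ with $p\in\{3,5,7,11,23\}$ and then works only with the cusp data of $\G_0(4p)$ (checking $n=12$ and $n=20$ separately via the Riemann--Roch bound (\ref{eqn:fms:RRbound})); you instead note that any $n$ which is neither a prime power nor square-free is divisible by $p^2q$ for distinct primes $p,q$, and attack $\G_0(p^2q)$ uniformly. Your cusp census is correct: one cusp of width $p^2q$, $p$ of width $q$, one of width $p^2$, $p$ of width $1$, with $i(p^2q)=p(p+1)(q+1)$ and $c(p^2q)=2p+2$, and substituting into (\ref{eqn:fms:weakbound}) gives exactly your displayed bound. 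What this buys is independence from Lemma \ref{prime_power_lemma}, at the cost of a slightly larger general computation.

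The one concrete gap is your enumeration of the small cases. Replacing each ceiling by $w_x/8$ in your display yields
\[
\dim S_{\e^3,3/2}(p^2q)\ \geq\ \frac{p(p+1)(q+1)}{24}-\frac{p}{8}-1,
\]
and this exceeds $1$ for all pairs of distinct primes \emph{except} $(p,q)\in\{(2,3),(2,5),(2,7),(3,2)\}$, i.e.\ $n\in\{12,20,28,18\}$. You list only $12,18,20$: for $n=28$ the algebraic estimate gives $0.75$, which only forces $\dim\geq 1$, so $n=28$ must also be checked with honest ceilings (widths $(28,7,7,4,1,1)$ give $4+1+1+1+1+1-4-3=2$, so the case is fine). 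Relatedly, your stated dichotomy ``bound algebraically for $\max(p,q)\geq 5$, check the rest by hand'' does not match the actual list of failures, since $20$ and $28$ both have $\max(p,q)\geq 5$. A small arithmetic slip: in the $n=12$ computation $\lceil 4/8\rceil=1$, not $2$, and you wrote five ceiling terms for six cusps; the correct sum $2+1+1+1+1+1=7$ happens to give the same total $7-2-3=2$. None of this undermines the method, but the exceptional-case list must be corrected for the proof to close.
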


\begin{proof}
From the fact, mentioned above, that $\dim S_{\e^3,3/2}(n)\geq S_{\e^3,3/2}(m)$ whenever $m$ divides $n$ we conclude from Lemma \ref{prime_power_lemma} that  the statement is automatically true unless $n=4p$ or $n=8p$ with $p\in\{3,5,7,11,23\}$.

The group $\G_0(4p)$ has two cusps of width $1$, two cusps of width $p$, one cusp of width $4$ and one with width $4p$. 
This gives deg$(D) > \frac{3p}{4}-3$.  On the other hand, we have $i(4p)=6p+6$ and $c(4p)=6$ and this gives ${\rm genus}(X) \leq \frac{p-3}{2}$, and so we obtain $\dim S_{\e^3,3/2}(4p)\geq \frac{p-2}{4}$ from (\ref{eqn:fms:RRbound}),
and hence $\dim S_{\e^3,3/2}(4p)>1$ unless $p=3$ or $p=5$. For $n=12$ we compute $\deg D=1$ and ${\rm genus}(X)=0$, and this gives $\dim S_{\e^3,3/2}(12)>1$ via (\ref{eqn:fms:RRbound}). Similarly for $n=20$ we have $\deg D=2$ and ${\rm genus}(X)=1$ and so $\dim S_{\e^3,3/2}(20)>1$ also. Therefore $\dim S_{\e^3,3/2}(4p)>1$ for all $p>2$.  Then it follows that $\dim S_{\e^3,3/2}(8p)\geq \dim S_{\e^3,3/2}(4p)>1$ for all $p>2$
and this finishes the proof. 
\end{proof}

For the sake of completeness we conclude with a proof of the result, established earlier in \cite{Cheng2011}, that $\dim S_{\e^3,3/2}(n)=1$ whenever $n$ is the order of an element of $M_{23}$.

\begin{lem}
If $n\in\{1,2,3,4,5,6,7,8,11,14,15,23\}$ then $\dim S_{\e^3,3/2}(n)=1$.
\end{lem}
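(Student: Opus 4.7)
The plan is to use the isomorphism $\mc{K}_X(D)\cong S_{\e^3,3/2}(n)$ (via multiplication by $\eta^3$) already exhibited at the start of this section, and to verify directly that $\mc{K}_X(D)=\CC$ in each of the twelve cases. Here $X=X_0(n)$ and $D=\sum_{x}(\lceil w_x/8\rceil-1)x$, and the twelve values of $n$ split naturally by genus: eight yield $\mathrm{genus}(X)=0$, three yield $\mathrm{genus}(X)=1$, and the single value $n=23$ yields $\mathrm{genus}(X)=2$.

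For $n\in\{1,2,3,4,5,6,7,8\}$ every cusp of $\G_0(n)$ has width dividing $8$, so $D=0$, and $X_0(n)$ has genus $0$. Thus $\mc{K}_X(0)$ is the space of constant functions on $X_0(n)\cong\PP^1$, giving $\dim S_{\e^3,3/2}(n)=1$ immediately.

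For $n\in\{11,14,15\}$ the only cusp of $\G_0(n)$ of width greater than $8$ is the cusp $[0]$, of width $n$, so $D=[0]$ has degree $1$; and $X_0(n)$ has genus $1$. A non-constant element of $\mc{K}_X([0])$ would be a meromorphic function on $X_0(n)$ with a single simple pole, inducing a degree-$1$ morphism $X_0(n)\to\PP^1$ and forcing the contradictory identification $X_0(n)\cong\PP^1$. Hence $\mc{K}_X(D)=\CC$ in these cases also.

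The subtle case is $n=23$. Here $D=2[0]$ and $X_0(23)$ has genus $2$. A non-constant $f\in\mc{K}_X(D)$ has pole divisor supported at $[0]$ of order $1$ or $2$; the order-$1$ possibility is excluded by the genus as before, while an order-$2$ pole yields a degree-$2$ morphism $f\colon X_0(23)\to\PP^1$ totally ramified over $f([0])$, presenting $X_0(23)$ as hyperelliptic with $[0]$ as a Weierstrass point. To complete the argument I would invoke the classical fact that $X_0(23)$ is hyperelliptic with the Atkin--Lehner involution $w_{23}$ as its unique hyperelliptic involution, together with the observation that $w_{23}$ swaps the two cusps; since neither cusp is then fixed by $w_{23}$, the cusp $[0]$ is not a Weierstrass point, and the resulting contradiction gives $\mc{K}_X(D)=\CC$. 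This last case is the principal obstacle: the other eleven values succumb to elementary genus considerations, but the genus-$2$ situation at $n=23$ requires the supplementary classical input about the hyperelliptic structure of $X_0(23)$ and its compatibility with $w_{23}$.
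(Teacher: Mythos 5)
Your proof is correct. For the eleven values $n\neq 23$ it is essentially identical to the paper's: $D=0$ for $n\leq 8$, so $\mc{K}_X(D)$ consists of constants (note that the appeal to genus zero is superfluous here, since holomorphic functions on any compact Riemann surface are constant), and for $n\in\{11,14,15\}$ a simple pole at the unique cusp of width greater than $8$ is ruled out because $X_0(n)$ has genus one. Where you genuinely diverge is at $n=23$. The paper stays inside Riemann--Roch: it first shows $\dim\mc{K}_X(x)=1$, deduces $\dim\O_X(x)=1$, identifies $\O_X(x)$ as spanned by the differential of the weight-two cusp form $\eta(\t)^2\eta(23\t)^2$, checks that this differential fails the stronger condition defining $\O_X(2x)$, and concludes $\dim\mc{K}_X(2x)=2+1-2+0=1$. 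You instead rule out a function with pole divisor exactly $2[0]$ by observing that it would exhibit $[0]$ as a Weierstrass point, hence as a fixed point of the unique hyperelliptic involution of the genus-two curve $X_0(23)$, and then invoke the classical fact that this involution is the Fricke involution $w_{23}$, which interchanges the two cusps. That fact is true and can be justified cheaply: $X_0(23)/w_{23}$ has genus zero, and on a genus-two curve any involution with rational quotient is the hyperelliptic involution, while the elementary formula $\t\mapsto -1/23\t$ visibly swaps the cusps $0$ and $\infty$. So the two arguments trade one piece of classical input for another: the paper needs the explicit eta-product generator of $\O_X(x)$ together with its vanishing orders at the cusps, whereas you need the genus of the Atkin--Lehner quotient; both are of comparable difficulty, and your version has the mild aesthetic advantage of explaining geometrically why the wide cusp cannot support the offending double pole.
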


\begin{proof} 

For $n\leq 8$ we have $D=0$. Since every holomorphic function on a compact Riemann surface is a constant we have $\mc{K}_X(D)=\mathbb C$ and as a result $\dim S_{\e^3,3/2}(n)=\dim\mc{K}_X(D)=1$. For $n\in\{11,14,15\}$ there is just one cusp of $\G_0(n)$ (the one represented by $1$) that has width larger than $8$. 
In these cases, denote by $x$ the image of $1\in\hat{\QQ}$ under the natural map $\HH\cup\hat{\QQ} \to X_0(n)$, then any $f\in \mc{K}_X(D)$ is either a constant or has a simple pole at $x$ and no other poles, but the latter is impossible since such an $f$ would induce an isomorphism between the Riemann sphere and $X_0(n)$, which is a genus one curve for $n\in \{11,14,15\}$. As a result we conclude $\mc{K}_X(D)=\mathbb C$ and $\dim S_{\e^3,3/2}(n)=\dim \mc{K}_X(D)=1$. 

It remains to show that $\dim S_{\e^3,3/2}(23)=1$. The group $\G_0(23)$ has two cusps, one of width $23$ represented by $1$, and one with width $1$ represented by $1/23$. As a result we have $D=2x$ where $x$ again denotes the image of $1\in\hat{\QQ}$ under  $\HH\cup\hat{\QQ} \to X_0(23)$. Consider first $D'=x$. Repeating the above argument and using the fact that $X_0(23)$ has genus $2$ we conclude that $\dim \O_X(D')= \dim\mc{K}_X(D')=1$. From the fact that $\O_X(D') \supset \O_X(D)$ and the one dimensional space $\O_X(D')$ is given by the weight 2 cusp form $\eta(\t)^2\eta(23\t)^2$, we conclude that $\dim\O_X(D)=0$ and hence $\dim S_{\e^3,3/2}(23)=1$.
\end{proof}
Taking the above lemmas together we obtain our main result.
\begin{thm}
We have $\dim S_{\e^3,3/2}(n)=1$ if and only if $n$ coincides with the order of an element of the sporadic group $M_{23}$; viz., $n\in\{1,2,3,4,5,6,7,8,11,14,15,23\}$. 
\end{thm}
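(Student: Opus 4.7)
The plan is to combine the four preceding lemmas and perform a short bookkeeping check. First I would recall from the character table of $M_{23}$ (e.g. via the ATLAS) that the set of orders of elements of $M_{23}$ is exactly
\[
T=\{1,2,3,4,5,6,7,8,11,14,15,23\}.
\]
The ``if'' direction of the theorem is then precisely the content of the final lemma above, which asserts $\dim S_{\e^3,3/2}(n)=1$ for every $n\in T$.

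For the ``only if'' direction, I would partition the positive integers according to the trichotomy used in the lemmas: (a) $n$ is a prime power (including $n=1$ as the empty product), (b) $n$ is a squarefree product of two or more distinct primes, (c) $n$ is neither a prime power nor squarefree. Every positive integer falls into exactly one of these classes. Applying Lemma~\ref{prime_power_lemma} in case (a), the squarefree lemma in case (b), and the remaining lemma in case (c), we conclude in each case that $\dim S_{\e^3,3/2}(n)>1$ unless $n$ lies in the explicit exceptional subset identified by the relevant lemma. Taking the union of these three exceptional subsets, together with $n=1$ (handled trivially since $\eta^3$ itself spans $S_{\e^3,3/2}(1)$ by the Riemann--Roch argument with $D=0$), recovers exactly the set $T$. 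Hence $\dim S_{\e^3,3/2}(n)=1$ forces $n\in T$.

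The only real work in this final step is verification---checking that the exceptional prime-power values $\{1,2,3,5,7,11,23,4,8\}$, the exceptional squarefree values $\{6,14,15\}$, and the absence of exceptional values in class (c) indeed combine to give $T$. This is immediate by inspection. All the analytic and geometric substance---the identification $S_{\e^3,3/2}(n)\cong\mc{K}_X(D)$, the Riemann--Roch lower bound, and the case-by-case cusp-width computations---is packaged in the four lemmas, so the theorem itself requires no further input. The main obstacle, therefore, is not in this assembly step but in those preceding lemmas; the delicate cases were the boundary values where the general bound just barely fails to force $\dim>1$, namely $n=23$ (handled via the weight-two cusp form $\eta(\t)^2\eta(23\t)^2$ on the genus-two curve $X_0(23)$) and the small composite levels $n=4p, 8p$, each of which was reduced to a Riemann--Roch calculation on the corresponding modular curve.
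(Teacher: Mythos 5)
Your proposal is correct and matches the paper exactly: the theorem is proved there simply by "taking the above lemmas together," and your assembly --- the trichotomy into prime powers, squarefree composites, and the remainder, with the union of the exceptional sets checked against the list of element orders of $M_{23}$ --- is precisely that argument, just written out explicitly. No gaps.
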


\section{Perspectives}\label{sec:persp}

As has been explained in the introduction, the fact that the space $S_{\e^3,3/2}(n)$ has dimension $1$ whenever $n$ is the order of an element of $M_{23}$ may, in light of the Rademacher sum construction of the McKay--Thompson series of Mathieu moonshine, be regarded as a counterpart to the genus zero property of monstrous moonshine since the latter is equivalent to the existence of Rademacher sum expressions for the monstrous McKay--Thompson series. This equivalence was extended in \cite{DunFre_RSMG} so as to obtain a Rademacher sum-based characterisation of the groups $\G_g<\SL_2(\RR)$ for $g$ in the monster by reformulating an earlier characterisation due to Conway--McKay--Sebbar \cite{ConMcKSebDiscGpsM}. The main result of this article, determining those $n$ for which $\dim S_{\e^3,3/2}(n)=1$, serves as evidence in support of an analogous characterisation of the discrete groups (and multipliers) of Mathieu moonshine. 

As in \S\ref{sec:intro} we attach a discrete group $\G_g=\G_0(n)<\SL_2(\ZZ)$ and a multiplier system $\psi_g=\rho_{n|h}\e^{-3}$ (cf. (\ref{eqn:rho})) with weight $1/2$ to each element $g\in M_{24}$. Following the suggestion made in \cite{Cheng:2012qc} we conjecture that the pairs $(\G,\psi)$ arising as $(\G_g,\psi_g)$ for $g\in M_{24}$ are exactly those of the form $(\G,\psi)=(\G_0(n),\rho_{n|h}\e^{-3})$ where 
\begin{enumerate}
\item
$n$ and $h$ are positive integers such that $h$ divides $\gcd(n,12)$, and 
\item
the Rademacher sum $R^{[-1/8]}_{\G,\psi,1/2}$ has shadow proportional to $\eta^3$.
\end{enumerate}
The main result of this article implies that if $h=1$ then the second condition above is necessarily satisfied when $n$ is the order of an element of $M_{23}$. It also suggests that the second condition is unlikely to be satisfied when $h=1$ and $n$ is not the order of an element of $M_{23}$, for then, according to our result, the space of possible shadow functions has dimension greater than one. 

Observe that the second condition actually implies that $R^{[-1/8]}_{\G,\psi,1/2}$ has vanishing shadow---and is thus a weak modular form---when $h>1$, for the multiplier of $\eta^3$ agrees with $\overline{\rho_{n|h}\e^{-3}}$ on $\G_0(nh)$ but differs from it on non-trivial cosets of $\G_0(nh)$ in $\G_0(n)$. A next step towards establishing the above conjectural characterization of the discrete groups and multipliers of Mathieu moonshine would be to carry out the $h>1$ analogue of the analysis presented in this article, showing that the space $S_{\rho_{n|h}\e^{-3},3/2}(n)$ of cusp forms for $\G_0(n)$ with weight $3/2$ and multiplier $\rho_{n|h}\e^{-3}$ vanishes if and only if $(n,h)=(n_g,h_g)$ for some fixed-point-free $g\in M_{24}$. The fact that $S_{\rho_{n|h}\e^{-3},3/2}(n)$ vanishes in case $(n,h)$ does arise from a fixed-point-free element of $M_{24}$ is established in \cite{Cheng2011}.

Another natural direction to explore is the generalisation of our results to umbral moonshine \cite{UM}. For $\ll$ an integer greater than $1$ and $0<r<\ll$ define
\begin{gather}
	S^{(\ll)}_r=\sum_{m\in\ZZ}(2\ll m+r)q^{(2\ll m+r)^2/4\ll}.
\end{gather}
Then $S^{(2)}_1$ recovers $\eta^3$ (cf. (\ref{eqn:intro:utheta})) and the vector-valued function $S^{(\ll)}=(S^{(\ll)}_1,\ldots,S^{(\ll)}_{\ll-1})$ is a vector-valued cusp form for the modular group with weight $3/2$ and a certain matrix-valued multiplier $\sigma^{(\ll)}$.
In \cite{UM} a weak vector-valued mock modular form $H^{(\ll)}$ is constructed for each $\ll\in\{2,3,4,5,7,13\}$ whose shadow is proportional to $S^{(\ll)}$, and evidence is collected in support of a family of conjectures relating $H^{(\ll)}$ to a finite group $G^{(\ll)}$ in analogy to the relationship between $H$ and $M_{24}$ described above. In fact Mathieu moonshine may be regarded as the special case of umbral moonshine that $\ll=2$; we have $H^{(2)}=H$ and $G^{(2)}=M_{24}$. For a positive integer $n$ define $S_{\s^{(\ll)},3/2}(n)$ to be the space of $(\ll-1)$-vector-valued cusp forms of weight $3/2$ for $\G_0(n)$ with multiplier system $\s^{(\ll)}$. Then $\dim S_{\s^{(\ll)},3/2}(n)\geq 1$ for all $n\geq 1$ since $S^{(\ll)}\in S_{\s^{(\ll)},3/2}(n)$. In light of the results of this paper, and upon inspection of the shadows described in \cite{UM}, it is tempting to suggest that the positive integers $n$ for which $\dim S_{\s^{(\ll)},3/2}(n)=1$ are exactly those such that there exists an element $g \in G^{(\ll)}$ of order $n$ with $\chi^{(\ll)}_g = \overline\chi^{(\ll)}_g >0 $. This condition means that $g$ has fixed points but no anti-fixed points in the natural signed permutation representation of $G^{(\ll)}$.  We refer to \cite{UM} for more details.

As is still the case for Ogg's observation on primes dividing the order of the monster, a conceptual explanation of the results of this article is not yet available. 
Nonetheless,  we have seen in \S\ref{sec:intro} that both results may be motivated by the Rademacher sum construction of the Mathieu and monstrous McKay--Thompson series. 
As a result, it is tempting to envisage a class of algebraic structures whose twisted characters are given by Rademacher sums. 
The relation between this algebraic structure and vertex algebra should be a chiral counterpart to the relation between string theory on $AdS_3$ geometries and conformal field theory in two dimensions. See \cite{DunFre_RSMG,Cheng2011} for a more detailed discussion.

In the monstrous case this is a central implication of the results of \cite{DunFre_RSMG}, and the construction of a such an object with the monster acting as automorphisms would explain the genus zero property. This is because, as mentioned before, the relevant Rademacher sums have the required invariance if and only if the underlying discrete groups have genus zero. The existence of elements of prime order in the monster with associated discrete group of the form $\G_0(p)^+$ then explains Ogg's observation. For the case of Mathieu moonshine the untwisted character is a Rademacher sum of weight $1/2$ with multiplier system $\psi=\e^{-3}$ and we can expect 
the twisted characters to have shadows proportional to that of the untwisted character, which is to say, proportional to $\eta^3$ since $\dim S_{\e^3,3/2}(1)=1$ according to our results.  As such, the construction of this  algebraic structure would furnish an algebraic foundation for the conjectural characterisation of the discrete groups of Mathieu moonshine described above, and it would explain our analogue of Ogg's result: that the primes dividing the order of $M_{24}$ are exactly those for which $\dim S_{\e^3,3/2}(p)=1$.

\appendix

\section{Special Functions}\label{sec:fun}

For the exponential $x\mapsto x^s$ with $s$ a non-integer we employ the principal branch of the logarithm, so that 
\begin{gather}\label{eqn:fun:pbranch}
	x^s=|x|^se^{\ii s\th}
\end{gather}
when $x=|x|e^{\ii\th}$ for $-\pi<\th\leq \pi$.

The {\em Dedekind eta function}, denoted $\eta(\t)$, is a holomorphic function on the upper half-plane defined by the infinite product 
\begin{gather}\label{eqn:fun:Ded}
	\eta(\t)=q^{1/24}\prod_{n>0}(1-q^n)
\end{gather}
where $q=\ex(\t)=e^{\tpi \t}$. It is a modular form of weight $1/2$ for the modular group $\SL_2(\ZZ)$ with multiplier $\e:\SL_2(\ZZ)\to\CC$ so that
\begin{gather}\label{eqn:fun:eps}
	\eta(\g\t)\e(\g)\jac(\g,\t)^{1/2}=\eta(\t)
\end{gather}
for all $\g = \left(\begin{smallmatrix} a&b\\ c&d \end{smallmatrix}\right) \in\SL_2(\ZZ)$, where $\jac(\g,\t)=(c\t+d)^{-1}$. The {\em multiplier system} $\e$ may be described explicitly as 
\begin{gather}\label{eqn:fun:dedmlt}
\e\bem a&b\\ c&d\eem 
	=
\begin{cases}
	\ex(-b/24),&c=0,\,d=1\\
	\ex(-(a+d)/24c+s(d,c)/2+1/8),&c>0
\end{cases}
\end{gather}
where $s(d,c)=\sum_{m=1}^{c-1}(d/c)((md/c))$ and $((x))$ is $0$ for $x\in\ZZ$ and $x-\lfloor x\rfloor-1/2$ otherwise. We can deduce the values $\e(a,b,c,d)$ for $c<0$, or for $c=0$ and $d=-1$, by observing that $\e(-\g)=\e(\g)\ex(1/4)$ for $\g\in\SL_2(\ZZ)$. 

For an integer $a$ and an integer $n=\varepsilon \prod_{i=1}^k p_i^{e_i}$ with $\varepsilon=\pm1$ and the $p_i$ mutually distinct primes, the {\em Kronecker symbol} $\left(\frac{a}{n}\right)$ is defined as follows.
First we have the multiplication rule
$$
\left(\frac{a}{n}\right) = \left(\frac{a}{\varepsilon}\right)\prod_{i=1}^k  \left(\frac{a}{p_i}\right)^{e_i}
$$
where the first factor is given by 
\begin{gather}
\left(\frac{a}{1}\right)  = 1,\qquad
\left(\frac{a}{-1}\right) =\begin{cases} -1 & a<0, \\ 1 & a\geq 0, \end{cases}\;
\end{gather}
and for odd primes $p$ the Kronecker symbol $\left(\frac{p}{n}\right)$ is identical to the Legendre symbol 
\begin{align}
\left(\frac{a}{p}\right) = \begin{cases} 0 & p\lvert a \\ 1 & a {\text{ is a quadratic residue mod } } p \\  -1 & a {\text{ is not a quadratic residue mod }} p\end{cases}
\end{align}
and coincides with $\left(\frac{a}{p}\right)  = a^{(p-1)/2}$ mod $p$. 
Finally, for $p=2$ we have 
\begin{align}
\left(\frac{a}{2}\right) = \begin{cases} 0 & 2\lvert a, \\ 1 & a= \pm 1 {\text{ mod }} 8,\\  -1 & a= \pm 3 {\text{ mod }} 8. \end{cases}
\end{align}


\addcontentsline{toc}{section}{References}
\bibliographystyle{alpha}
\bibliography{gpsm24_tex}
\end{document}